\newcommand{\df}[1]{\ensuremath{\operatorname{df}({#1})}\xspace}
\newcommand{\core}{{\mathrm{core}}}
\newcommand{\girth}{{\mathrm{girth}}}
\newcommand{\den}{\operatorname{dn}\xspace}
\theoremstyle{plain}
\newtheorem{theorem}{Theorem}[section]
\newtheorem{lemma}[theorem]{Lemma}
\newtheorem{proposition}[theorem]{Proposition}
\newtheorem{corollary}[theorem]{Corollary}
\theoremstyle{definition}
\newtheorem*{example*}{Example}
\title{Girth, oddness, and colouring defect of snarks}
\author[J. Karab\'a\v{s}]{J\'an Karab\'a\v{s}}
\email[J. Karab\'a\v{s}]{karabas@savbb.sk}
\author[E. M\'a\v cajov\'a]{Edita M\'a\v cajov\'a}
\email[E. M\'a\v cajov\'a]{macajova@dcs.fmph.uniba.sk}
\author[R. Nedela]{Roman Nedela}
\email[R. Nedela]{nedela@savbb.sk}
\author[M. \v Skoviera]{Martin \v Skoviera}
\email[M. \v Skoviera]{skoviera@dcs.fmph.uniba.sk}
\address[J. Karab\'a\v{s}]{
Department of Computer Science, Faculty of Natural Sciences,
Matej Bel University, Bansk\'a Bystrica, Slovakia
}
\address[E. M\'a\v cajov\'a, M. \v Skoviera]{
Comenius University, Mlynsk\' a dolina, Bratislava, Slovakia
}
\address[R. Nedela]{
Faculty of Applied Sciences, University of West Bohemia,
Pilsen, Czech Republic}
\address[J. Karab\'a\v{s}, R. Nedela]{
Mathematical Institute of Slovak Academy of Sciences,
Bansk\'a Bystrica, Slovakia
}
\begin{document}

\begin{abstract}
The colouring defect of a cubic graph, introduced by Steffen in
2015, is the minimum number of edges that are left uncovered by
any set of three perfect matchings. Since a cubic graph has
defect $0$ if and only if it is $3$-edge-colourable, this
invariant can measure how much a cubic graph differs from a
$3$-edge-colourable graph. Our aim is to examine the
relationship of colouring defect to oddness, an extensively
studied measure of uncolourability of cubic graphs, defined as
the smallest number of odd circuits in a $2$-factor. We show
that there exist cyclically $5$-edge-connected snarks (cubic
graphs with no $3$-edge-colouring) of oddness $2$ and arbitrarily
large colouring defect. This result is achieved by means of a
construction of cyclically $5$-edge-connected snarks with
oddness $2$ and arbitrarily large girth. The fact that our
graphs are cyclically $5$-edge-connected significantly
strengthens a similar result of Jin and Steffen (2017), which
only guarantees graphs with cyclic connectivity at most $3$. At
the same time, our result improves Kochol's original
construction of snarks with large girth (1996) in that it
provides infinitely many nontrivial snarks of any prescribed
girth $g\ge 5$, not just girth at least~$g$.
\end{abstract}

\maketitle
\thispagestyle{empty}
\section{Introduction}
\noindent{}The \emph{colouring defect} of a cubic graph $G$,
denoted by $\df{G}$, is the smallest number of edges left
uncovered by any set of three perfect matchings of $G$. For
brevity, we usually drop the adjective ``colouring'' and speak
of the \emph{defect} of a cubic graph. Clearly, the defect of a
$3$-edge-colourable cubic graph is $0$, but in general it can
be arbitrarily large. Defect thus can be regarded as a measure
of uncolourability of a cubic graph.

The concept of defect was introduced by Steffen as $\mu_3(G)$
in \cite{S2}, where he also established its fundamental
properties. Among other things he proved that every
$2$-connected cubic graph which is not $3$-edge-colourable -- a
snark -- has defect at least three. Another notable result of
\cite{S2} states that the defect of a snark is at least as
large as one half of its girth. Since there exist snarks of
arbitrarily large girth \cite{Ko}, there exist snarks of
arbitrarily large defect.

The defect of a cubic graph was further examined by Jin and
Steffen in \cite{JS} and was also discussed in the survey of
uncolourability measures by Fiol et al.
\cite[pp.\,13--14]{FMS-survey}. Jin and Steffen~\cite{JS} studied
the relationship of defect to other measures of
uncolourability, in particular its relationship to oddness. The
\emph{oddness} of a cubic graph~$G$, denoted by $\omega(G)$, is
the minimum number of odd circuits in a $2$-factor of $G$; it
is correctly defined for any bridgeless cubic graph. In
\cite[Corollary 2.4]{JS}, Jin and Steffen proved  that $\df
G\geq 3\omega(G)/2$ and investigated the extremal case where
$\df G= 3\omega(G)/2$ in detail. The inequality implies that
with increasing oddness the difference between defect and
oddness becomes arbitrarily large.

Measures of uncolourability are particularly interesting for
\emph{nontrivial snarks}, those which are cyclically
$4$-edge-connected and have girth at least $5$. The reason is
that several important conjectures in graph theory, such as the
cycle double cover conjecture, Fulkerson's conjecture, and
others, would have their minimal counterexamples in this class
\cite{J85, MM}. Note that nontrivial snarks with arbitrarily
large oddness were constructed in~\cite{Ko2, LMMS, S4}. In this
context it is natural to ask whether the difference between
defect and oddness remains arbitrarily large when oddness is
fixed even for nontrivial snarks. To this end, Jin and
Steffen~\cite[Theorem~3.4]{JS} proved that for any given
oddness $\omega>0$ and any $d\ge3\omega/2$ there exists a
bridgeless cubic graph with oddness $\omega$ and defect at
least~$d$. However, their construction produces graphs with
cyclic connectivity not exceeding~$3$.

Our main result, Theorem~\ref{thm:snarxlargegirth}, improves
the result of Jin and Steffen for $\omega=2$ by establishing
the existence of cyclically $5$-edge-connected snarks with
oddness $2$ and arbitrarily large defect. This result is
achieved through a construction of cyclically
$5$-edge-connected snarks with oddness $2$ and arbitrarily
large girth. Our construction strengthens the original
construction by Kochol~\cite{Ko} in that it provides infinitely
many nontrivial snarks of \emph{any} prescribed girth $g\ge 6$,
not just girth at least $g$. Snarks with arbitrarily large
defect, oddness~$2$, and cyclic connectivity $4$ can be
constructed in a similar manner. Note that the existence of
nontrivial snarks of arbitrarily large girth was not confirmed
until 1996, when Kochol~\cite{Ko} disproved a conjecture by
Jaeger and Swart~\cite[Conjecture 2]{JSw}.

\medskip

A detailed study of colouring defect, focused on snarks with
defect $3$, is carried out in our companion
papers~\cite{KMNS-defect3, KMNS-pmi}, in which the basic
properties of defect and structures related to it are discussed
in a greater detail.

\section{Preliminaries}

\noindent{}All graphs in this paper are finite and for the most
part cubic (3-valent). Multiple edges and loops are permitted.
We use the term \emph{circuit} to mean a connected $2$-regular
graph. The length of a shortest circuit in a graph is its
\emph{girth}. By a $k$-\emph{cycle} we mean a circuit of
length~$k$.

A graph $G$ is said to be \emph{cyclically $k$-edge-connected}
if the removal of fewer than $k$ edges from $G$ cannot create a
graph with at least two components containing circuits. An edge
cut $S$ in $G$ that separates two circuits from each other is
\emph{cycle-separating}. It is not difficult to see that the
set of edges of a cubic graph leaving a shortest circuit is
cycle-separating in all connected cubic graphs other than the
complete bipartite graph $K_{3,3}$, the complete graph $K_4$,
and the graph consisting of two vertices and three parallel
edges. An edge cut of a cubic graph consisting of independent
edges is always cycle-separating. Conversely, a
cycle-separating edge cut of minimum size is independent. A
cycle-separating edge cut that separates a shortest cycle from
the rest of $G$ is called \emph{trivial}.

Large graphs are typically constructed from smaller building
blocks called multipoles. Similarly to graphs, each
\emph{multipole} $M$ has its vertex set $V(M)$, its edge set
$E(M)$, and an incidence relation between vertices and edges.
Each edge of $M$ has two ends, and each end may, but need not
be, incident with a vertex of $M$. An end of an edge that is
not incident with a vertex is called a \emph{free end} or a
\emph{semiedge}. An edge with exactly one free end is called a
\emph{dangling edge}. An \emph{isolated edge} is an edge whose
both ends are free. All multipoles considered in this paper are
\emph{cubic}; it means that every vertex is incident with
exactly three edge-ends. An \emph{$n$-pole} $M$ is a multipole
with $n$ free ends. If its free ends are $s_1,s_2,\ldots, s_n$,
we write $M=M(s_1,s_2,\ldots, s_n)$.

Free ends of a multipole can be distributed into pairwise
disjoint sets, called \emph{connectors}. Connectors of
multipoles are usually matched and their free ends are
subsequently identified in a straightforward manner to produce
cubic graphs. An \emph{$(n_1, n_2,\ldots, n_k)$-pole} is an
$n$-pole with $n=n_1+n_2+\cdots + n_k$ whose semiedges are
distributed into $k$ connectors $S_1,S_2,\ldots, S_k$, each
$S_i$ being of size $n_i$. A \emph{dipole} is a multipole with
two connectors, while a \emph{tripole} is a multipole with
three connectors. An \emph{ordered} multipole is the one where
each connector is endowed with a linear order.

An \emph{edge colouring} of a multipole $M$ is a mapping from
the edge set of $M$ to a set of colours such that any two
edge-ends incident with the same vertex carry distinct colours.
A \emph{$k$-edge-colouring} is a colouring where the set of
colours has $k$ elements. A cubic graph $G$ is
\emph{colourable} if it admits a $3$-edge-colouring. A
$2$-connected cubic graph which does not admit a
$3$-edge-colouring is called a \emph{snark}.

In the study of snarks it is useful to take the colours $1$,
$2$, and $3$ to be the nonzero elements of the group
$\mathbb{Z}_2\times\mathbb{Z}_2$. Specifically, one can
identify a colour with its binary representation: $1=(0,1)$,
$2=(1,0)$, and $3=(1,1)$. The condition that the three colours
meeting at any vertex $v$ are all distinct then becomes
equivalent to requiring that the sum of colours at $v$ is
$0=(0,0)$. In other words, identifying the colours with the
elements  of $\mathbb{Z}_2 \times \mathbb{Z}_2-\{0\}$ turns a
$3$-edge-colouring to a nowhere-zero $\mathbb{Z}_2 \times
\mathbb{Z}_2$-flow. Recall that an \textit{$A$-flow} on a
multipole $M$ is a function $\sigma\colon E(M)\to A$, with
values in an abelian group~$A$, together with an orientation of
$M$, such that \emph{Kirchhoff's law} is fulfilled: at each
vertex of $M$ the sum of all incoming values equals the sum of
all outgoing ones. An $A$-flow $\sigma$ is \emph{nowhere-zero}
if $\sigma(e)\ne 0$ for each edge $e$ of $G$. If $x=-x$ for
every $x\in A$, the orientation of $M$ can be ignored. This is
possible precisely when $A$ is isomorphic to an elementary
abelian $2$-group $\mathbb{Z}_2^n$.

The following well-known statement is a direct consequence of
Kirchhoff's law. Rougly speaking, it tells us that the total
outflow from any nonempty set of vertices equals $0$.

\begin{lemma}[Parity Lemma]\label{lem:par}
Let $M = M(s_1,s_2, \ldots,s_n)$ be a $n$-pole endowed with a
$3$-edge-colouring $\sigma$. Then
$$\sum_{i=1}^n\sigma(s_i)=0.$$
Equivalently, the number of free ends of $M$ carrying any fixed
colour has the same parity as $n$.
\end{lemma}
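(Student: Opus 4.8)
The plan is to exploit the reformulation already set up in the excerpt, namely that a $3$-edge-colouring is precisely a nowhere-zero $\mathbb{Z}_2\times\mathbb{Z}_2$-flow, so that the three colours meeting at every vertex sum to $0$, and that the orientation may be ignored because $x=-x$ holds throughout $\mathbb{Z}_2\times\mathbb{Z}_2$. The cleanest route I see is a double counting of colours over \emph{edge-ends} rather than over edges. First I would form the total sum $T=\sum_x \sigma(x)$ taken over all edge-ends $x$ of $M$, where $\sigma(x)$ denotes the colour of the edge carrying $x$. Since every edge has exactly two ends, each contributing its common colour, every edge contributes $2\sigma(e)=0$ to $T$, and hence $T=0$.

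Next I would split $T$ according to whether an edge-end is incident with a vertex or is free. Writing $\Sigma$ for the contribution of the vertex-incident ends, we have $T=\Sigma+\sum_{i=1}^n\sigma(s_i)$, because the free ends are exactly $s_1,\ldots,s_n$. Grouping the vertex-incident ends by their vertex, each (cubic) vertex contributes the sum of its three distinct colours, which is $1+2+3=(0,1)+(1,0)+(1,1)=0$; summing over all vertices gives $\Sigma=0$. Combining $T=0$ with $\Sigma=0$ then yields $\sum_{i=1}^n\sigma(s_i)=0$, as claimed. This formulation treats dangling edges and isolated edges uniformly: an isolated edge contributes its colour twice to the free-end sum, i.e.\ $0$, while a dangling edge contributes exactly once.

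For the equivalent parity statement I would let $a$, $b$, $c$ count the free ends coloured $1$, $2$, $3$ respectively, so that $a+b+c=n$. Reading the identity $\sum_{i=1}^n\sigma(s_i)=0$ coordinatewise in $\mathbb{Z}_2\times\mathbb{Z}_2$ gives $b+c\equiv 0$ and $a+c\equiv 0 \pmod 2$, whence $a\equiv b\equiv c \pmod 2$ and therefore $n=a+b+c\equiv c \pmod 2$; by symmetry the number of free ends of any fixed colour is congruent to $n$ modulo~$2$.

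I do not expect a serious obstacle: the statement is a direct consequence of Kirchhoff's law, and the whole content lies in organising the summation so that interior edges cancel in pairs. The only points that demand attention are the bookkeeping distinction between edges and edge-ends and the correct accounting of isolated and dangling edges, both of which the edge-end formulation handles automatically.
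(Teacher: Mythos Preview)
Your argument is correct and complete. The paper itself does not supply a proof of the Parity Lemma: it merely records that the statement ``is a direct consequence of Kirchhoff's law'' and that ``the total outflow from any nonempty set of vertices equals $0$.'' Your edge-end double count is exactly the explicit version of that remark---summing the local Kirchhoff identities $\sum_{e\ni v}\sigma(e)=0$ over all vertices and observing that every internal edge-end cancels in pairs---so the approaches coincide in spirit; you have simply written out what the paper leaves to the reader, including the careful handling of isolated and dangling edges and the coordinatewise derivation of the parity formulation.
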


Our definition of a snark leaves the concept as wide as
possible since more restrictive definitions could lead to
overlooking certain important phenomena that occur among
snarks. In this manner we follow works of Cameron et al.
\cite{CCW}, Nedela and \v Skoviera \cite{NS-decred}, Steffen
\cite{S1}, and others, rather than a common approach where
snarks are required to be cyclically $4$-edge-connected and
have girth at least $5$, see for example~\cite{FMS-survey}. In
this paper, such snarks are called \emph{nontrivial}. The
problem of nontriviality of snarks has been widely discussed in
the literature, see for example \cite{CCW,NS-decred,S1}. Here
we adopt a systematic approach to nontriviality of snarks
proposed by Nedela and \v Skoviera \cite{NS-decred} based on
the concept of removability of certain sets of vertices or
subgraphs. We say that an induced subgraph $H$ of a snark $G$
is \emph{non-removable} if $G - V(H)$ is colourable; otherwise,
$H$ is \emph{removable}. It is an easy consequence of Parity
Lemma that circuits of length at most $4$ in snarks are
removable.

\section{Arrays of perfect matchings and the defect of a snark}

\noindent{}In order to formalise our discussion of colouring
defect it is convenient to define a \emph{$3$-array of perfect
matchings} in a cubic graph $G$, briefly a \emph{$3$-array} of
$G$, as an arbitrary collection $\mathcal{M}=\{M_1, M_2, M_3\}$
of three not necessarily distinct perfect matchings of $G$.
Since every proper $3$-edge-colouring can be regarded as an
array whose members are the three colour classes, $3$-arrays
can be viewed as approximations of $3$-edge-colourings. An edge
of $G$ that belongs to at least one of the perfect matchings of
the array $\mathcal{M}=\{M_1, M_2, M_3\}$ will be considered to
be \emph{covered}. An edge will be called \emph{uncovered},
\emph{simply covered}, \emph{doubly covered}, or \emph{triply
covered} if it belongs, respectively, to zero, one, two, or
three members of~$\mathcal{M}$.

Given a graph $G$, it is a natural task to maximise the number
of covered edges in a $3$-array of $G$, or equivalently, to
minimise the number of uncovered ones. A $3$-array that leaves
the minimum number of uncovered edges will be called
\emph{optimal}. The number of edges left uncovered by an
optimal $3$-array is the \emph{colouring defect} of $G$,
denoted by $\df{G}$.

Let $\mathcal{M}=\{M_1, M_2, M_3\}$ be a $3$-array of a cubic
graph $G$. One way to describe $\mathcal{M}$ is based on
regarding the indices $1$, $2$, and $3$ as colours. Since the
same edge may belong to more than one member of $\mathcal{M}$,
an edge of $G$ may receive from $\mathcal{M}$ more than one
colour. To each edge $e$ of $G$ we can therefore assign the
list $\varphi(e)$ of all colours in lexicographic order it
receives from $\mathcal{M}$. In this way $\mathcal{M}$ gives
rise to a mapping
$$\varphi\colon E(G)\to\{\emptyset, 1, 2, 3, 12, 13, 23, 123\}$$
where $\emptyset$ denotes the empty list. Such a mapping
determines a $3$-array of $G$ if and only if each number from
$\{1,2,3\}$ occurs precisely once on the edges around any
vertex. Moreover, $\varphi$ is a proper edge colouring if and
only if $G$ has no triply covered edge with respect to
$\mathcal{M}$. For more details, see~\cite{KMNS-defect3}.

Another important structure associated with a $3$-array is its
core. The \emph{core} of a $3$-array $\mathcal{M}=\{M_1, M_2,
M_3\}$ of $G$ is the subgraph of $G$ induced by all the edges
of $G$ that are not simply covered; we denote it by
$\core(\mathcal{M})$. The core will be called \emph{optimal}
whenever $\mathcal{M}$ is optimal. Given a $3$-array
$\mathcal{M}$, let $E_i=E_i(\mathcal{M})$ denote the set of all
edges of $G$ that belong to precisely $i$ perfect matchings of
$\mathcal{M}$, where $0\le i\le 3$. The edge set of
$\core(\mathcal{M})$ thus coincides with $E_0(\mathcal{M})\cup
E_{2}(\mathcal{M})\cup E_{3}(\mathcal{M})$. It is worth
mentioning that if $G$ is $3$-edge-colourable and $\mathcal{M}$
consists of three disjoint perfect matchings, then
$\core(\mathcal{M})$ is empty.
If $G$ is not $3$-edge-colourable, then the
core must be nonempty for every $3$-array $\mathcal{M}$ of $G$.

Figure~\ref{fig:petersen_core} shows the Petersen graph endowed
with a $3$-array whose core is the ``outer'' $6$-cycle. The
core is in fact optimal.

\begin{figure}[h!]
 \centering
 \includegraphics[scale=1.2]{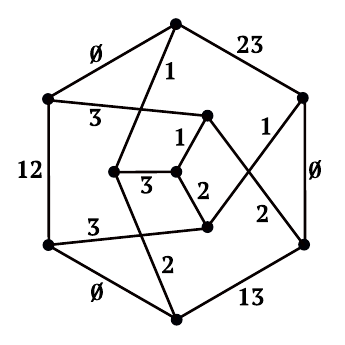}
 \caption{An optimal $3$-array of the Petersen graph}
\label{fig:petersen_core}
\end{figure}

The following proposition, much of which was proved by Steffen
in {\cite[Lemma~2.2]{S2}} and \cite[Lemma~2.1]{JS} lists the
most fundamental properties of cores.

\begin{proposition}\label{prop:core}
Let $\mathcal{M}=\{M_1, M_2, M_3\}$ be an arbitrary $3$-array
of perfect matchings of a snark $G$. Then the following hold:
\begin{enumerate}[{\rm (i)}]
\item Every component of $\core(\mathcal{M})$ is either an
    even circuit or a subdivision of a cubic graph. If $G$
    has no triply covered edge, then $\core(\mathcal{M})$
    is a set of disjoint even circuits, and vice-versa.
\item Every $2$-valent vertex of $\core(\mathcal{M})$ is
    incident with one doubly covered edge and one uncovered
    edge, while every $3$-valent vertex is incident with
    one triply covered edge and two uncovered edges.
\item $|E_0(\mathcal{M})| =
    |E_2(\mathcal{M})|+2|E_3(\mathcal{M})|$.
\item $G-E_0(\mathcal{M})$ is $3$-edge-colourable.
\end{enumerate}
\end{proposition}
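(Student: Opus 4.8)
The plan is to work directly with the array $\mathcal{M}=\{M_1,M_2,M_3\}$ and the colour-count sets $E_0,E_2,E_3$, analysing the core vertex by vertex according to how many of the three matchings pass through each edge. Since every vertex $v$ of $G$ is cubic and each $M_j$ is a perfect matching, exactly one edge at $v$ lies in $M_j$, so the multiset of incidences of the three matchings at $v$ consists of exactly three edge-slots filled. I would first classify the possible local pictures at $v$ by listing the ways three ``tokens'' (one per matching) can be placed on the three edges of $v$: the partitions of $3$ into parts of size at most $3$ are $1{+}1{+}1$, $2{+}1$, and $3$. These correspond respectively to: all three edges simply covered; one doubly and one simply covered edge (the third uncovered); and one triply covered edge (the other two uncovered).

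For part (ii), I would simply read off the claim from this local classification. A $2$-valent vertex of the core is one incident with exactly two non-simply-covered edges; checking the three cases shows this forces the $2{+}1$ pattern, giving one doubly covered edge and one uncovered edge at $v$. A $3$-valent core vertex has all three incident edges non-simply-covered, which forces the $3$ pattern: one triply covered edge and two uncovered edges. For part (i), every core vertex is thus either $2$-valent or $3$-valent, so the core is a subdivision of a cubic (multi)graph, and each component is either an even circuit (all vertices $2$-valent) or a genuine subdivided cubic graph (at least one $3$-valent vertex). \emph{Evenness} of the circuit components is where I would invoke the Parity Lemma: a pure circuit in the core alternates doubly covered and uncovered edges by (ii), forcing even length. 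If $G$ has no triply covered edge then the $3$ pattern never occurs, so no $3$-valent core vertices exist and the core is a disjoint union of even circuits; conversely, disjoint even circuits contain no $3$-valent vertex, hence no triply covered edge. The main subtlety here is justifying evenness rigorously — the alternation argument via (ii) is clean, but I would double-check that an odd circuit would violate the matching structure through Parity Lemma applied to the circuit's edge-ends.

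For part (iii) I would use a global counting identity. Each perfect matching $M_j$ contains exactly $|V(G)|/2$ edges, so summing over $j$ gives $\sum_j |M_j| = 3|V(G)|/2$; but this same sum counts each edge of $G$ with multiplicity equal to the number of matchings containing it, namely $|E_1| + 2|E_2| + 3|E_3| = 3|V(G)|/2$. Separately, counting edge-ends shows $|V(G)|$ cubic vertices give $3|V(G)|/2$ edges total, so $|E_0|+|E_1|+|E_2|+|E_3| = 3|V(G)|/2$. Subtracting the two identities yields $|E_0| = |E_2| + 2|E_3|$ directly.

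For part (iv), I would observe that deleting the uncovered edges $E_0$ leaves the three matchings intact and now jointly covering \emph{every} remaining edge. The point is to convert $\mathcal{M}$ into a genuine $3$-edge-colouring of $G - E_0(\mathcal{M})$: after removing $E_0$, part (ii) shows that each former $2$-valent core vertex now has degree $2$ and each former $3$-valent core vertex has its three incident edges reduced by two (the two uncovered ones), leaving degree $1$, so the colour lists $\varphi(e)$ restricted to $G-E_0$ assign each edge a nonempty list that can be refined to a proper colouring. I expect the cleanest route is the flow reformulation given in the preliminaries: the mapping $\varphi$ associated to $\mathcal{M}$ already places each of the colours $1,2,3$ exactly once around every vertex, and removing the uncovered edges removes precisely the slack, so the resulting assignment is a proper $3$-edge-colouring. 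This last step is the part I would be most careful about, since one must verify that no vertex of $G-E_0$ is left with a repeated colour or a missing one, which again follows from the case analysis at the start.
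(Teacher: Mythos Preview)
The paper does not give its own proof of this proposition; it merely attributes the result to Steffen's earlier work and moves on. So there is no paper proof to compare against, and your plan must be judged on its own merits.

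Your approach is sound and essentially complete. The local classification of token patterns $1{+}1{+}1$, $2{+}1$, $3$ is exactly the right organising device, and parts (i)--(iii) fall out of it as you describe. Two small remarks. First, for the evenness of circuit components in (i), the alternation argument via (ii) is already a full proof: at each $2$-valent core vertex one incident core edge is uncovered and the other doubly covered, so along a circuit the two types must alternate, forcing even length. You do not need the Parity Lemma here, and your hedge about ``double-checking via Parity Lemma'' is unnecessary. Second, for (iv) your description is a bit diffuse; the crisp statement that makes it work is this: for each $e\in E(G)\setminus E_0$ choose any index $\psi(e)\in\{1,2,3\}$ with $e\in M_{\psi(e)}$ (possible since $e$ is covered); then $\psi$ is a proper $3$-edge-colouring of $G-E_0$, because two edges sharing a vertex cannot lie in the same perfect matching $M_j$ and hence cannot receive the same colour $j$. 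That single sentence replaces the case analysis you allude to at the end.
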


Proposition~\ref{prop:core}~(i) implies that the smallest
possible cores are the $2$-cycle and the $4$-cycle. However,
Parity Lemma implies that circuits of length at most four are
removable, so neither of them can occur as a core.
Consequently, the following important fact holds.

\begin{corollary}[{\cite{S2}}]
The defect of every snark has value at least three.
\end{corollary}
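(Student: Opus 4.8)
My plan is to fix an optimal $3$-array $\mathcal{M}=\{M_1,M_2,M_3\}$, for which $\df{G}=|E_0(\mathcal{M})|$ by the definition of defect, and then to rule out the three possibilities $|E_0(\mathcal{M})|\in\{0,1,2\}$ one at a time. The two tools I would lean on throughout are the numerical identity $|E_0(\mathcal{M})|=|E_2(\mathcal{M})|+2|E_3(\mathcal{M})|$ from Proposition~\ref{prop:core}(iii) and the structural description of $\core(\mathcal{M})$ supplied by Proposition~\ref{prop:core}(i)--(ii). The point is that a small value of $|E_0(\mathcal{M})|$ forces the core to be so small that its only admissible shapes are excluded in a snark.

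First I would observe that, since $G$ is a snark, $\core(\mathcal{M})$ is nonempty for every $3$-array, in particular for $\mathcal{M}$. If $|E_0(\mathcal{M})|=0$, the identity forces $|E_2(\mathcal{M})|=|E_3(\mathcal{M})|=0$, so the core would be empty, a contradiction; hence $\df{G}\ge 1$. Next, if $|E_0(\mathcal{M})|=1$, the identity yields $|E_3(\mathcal{M})|=0$ and $|E_2(\mathcal{M})|=1$, so by Proposition~\ref{prop:core}(i) the core is a disjoint union of even circuits with exactly $|E_0(\mathcal{M})|+|E_2(\mathcal{M})|=2$ edges; the only such graph is a single $2$-cycle. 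Since circuits of length at most four are removable and therefore cannot occur as components of a core, this case is impossible.

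Finally, for $|E_0(\mathcal{M})|=2$ the identity leaves two subcases. If $|E_3(\mathcal{M})|=0$, then $|E_2(\mathcal{M})|=2$ and the core is a disjoint union of even circuits with four edges in total, hence either a single $4$-cycle or two $2$-cycles; as before, such short circuits cannot appear in a core. If instead $|E_3(\mathcal{M})|=1$, then $|E_2(\mathcal{M})|=0$, so by Proposition~\ref{prop:core}(ii) the core contains no $2$-valent vertex and is therefore a genuine cubic graph with exactly three edges, namely the graph on two vertices joined by three parallel edges. Because $G$ is cubic, those two vertices would use up all their incidences inside the core, forcing $G$ itself to be this triple edge, which is $3$-edge-colourable and hence not a snark. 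This contradiction rules out $|E_0(\mathcal{M})|=2$ as well, and so $\df{G}\ge 3$.

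The step I expect to be the main obstacle is the last one: one must notice that a triply covered edge compels a $3$-valent core and then argue, using the cubicity of $G$, that the resulting triple edge cannot be embedded in a nontrivial snark. The even-circuit subcases, by contrast, reduce immediately to the already-available removability of circuits of length at most four, so the heart of the argument is the careful enumeration of core shapes permitted by Proposition~\ref{prop:core}(i)--(iii) under the constraint $|E_0(\mathcal{M})|\le 2$.
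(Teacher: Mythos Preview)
Your approach is the same as the paper's: both deduce from Proposition~\ref{prop:core} that a core with fewer than three uncovered edges would have to be one of a handful of tiny configurations, and then exclude each via the removability of short circuits. Your case analysis is in fact more thorough than the paper's two-sentence sketch, since you explicitly treat the subcase $(|E_0|,|E_2|,|E_3|)=(2,0,1)$ where the core would be the theta graph.

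There is one small gap. When $|E_0|=2$ and $|E_3|=0$, one of the shapes you list is a disjoint union of two $2$-cycles $C_1\cup C_2$, and you dismiss it with ``as before, such short circuits cannot appear in a core''. But the removability argument you are invoking only shows that a short circuit cannot be the \emph{entire} core: if $\core(\mathcal{M})=C$, the $3$-array properly colours the multipole $G-V(C)$, forcing $C$ to be non-removable. With two components present, the $3$-array does \emph{not} properly colour $G-V(C_1)$, because the uncovered edge of $C_2$ survives there. The fix is short: the two simply covered edges leaving each $C_i$ receive the same colour from $\mathcal{M}$ (namely the index absent from the doubly covered edge of $C_i$), so the proper colouring that $\mathcal{M}$ induces on $G-V(C_1\cup C_2)$ extends over both $2$-cycles to a $3$-edge-colouring of $G$, contradicting that $G$ is a snark.
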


Following Steffen \cite{S2} we say that the core of a $3$-array
$\mathcal{M}$  of a cubic graph $G$ is \emph{cyclic} if each
component of $\core(\mathcal{M})$ is a circuit. By
Proposition~\ref{prop:core}~(ii), the core is cyclic if and
only if $G$ has no triply covered edge. The well-known
conjecture  of Fan and Raspaud \cite{FR} suggests that every
bridgeless cubic graph has three perfect matchings $M_1$,
$M_2$, and $M_3$ with $M_1\cap M_2\cap M_3=\emptyset$.
Equivalently, the conjecture states that  every bridgeless
cubic graph has a $3$-array with a cyclic core. The conjecture
is trivially true for $3$-edge-colourable graphs. M\'a\v
cajov\'a and \v Skoviera \cite{MS-comb} proved this conjecture
to be true for cubic graphs with oddness $2$. We emphasise that
neither the conjecture nor the proved facts suggest anything
about optimal cores.

\section{Oddness, girth and colouring defect}\label{sec:otherinv}

\noindent{}In this section we discuss relationships between
several measures of uncolourabilty of cubic graphs (in the
sense of the survey \cite{FMS-survey}), with particular
emphasis on oddness and defect. Most of the inequalities proved
here are known, however, the proofs which we offer are cleaner
and more transparent. The main result of this paper,
Theorem~\ref{thm:snarxlargegirth} (to be proved in the next
section), relates oddness, defect and -- implicitly -- girth.
Its proof uses one of the inequalities established in present
section.

Let $G$ be a bridgeless cubic graph. The \emph{resistance} of
$G$, denoted by $\rho(G)$, is the smallest number of edges
whose removal from $G$ yields a $3$-edge-colourable graph. It
is well known that $\rho(G)\le\omega(G)$ and that $\rho(G)=2$
if and only if $\omega(G)=2$, see~\cite[Lemma~2.5]{S1}. The
\emph{density} $\den(G)$ of $G$ is the minimum number of common
edges that two perfect matchings in $G$ can have. This
invariant was introduced by Steffen in~\cite{S3} and denoted by
$\gamma_2(G)$ in~\cite{JS}.  Jin and Steffen
in~\cite[Theorem~2.2]{JS} proved that
\begin{equation}\label{eq:steffen}
\omega(G)\leq 2\den(G)\leq\df{G}-1,
\end{equation}
if $G$ is not $3$-edge-colourable. As a consequence, if
$\df{G}=3$, then $\den(G)=1$ and $\omega(G)=2$.

We prove \eqref{eq:steffen} starting with the inequality on the
left-hand side.

\begin{proposition}\label{lem:omegagamma}
If $G$ is a bridgeless cubic graph, then $\omega(G)\leq
2\den(G)$.
\end{proposition}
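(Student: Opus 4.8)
The plan is to start from two perfect matchings that witness the density and read off a single $2$-factor whose odd circuits can be counted. Concretely, I would fix perfect matchings $M_1$ and $M_2$ of $G$ with $|M_1\cap M_2|=\den(G)=:d$, write $R=M_1\cap M_2$ for the set of common edges, and pass to the complementary $2$-factor $F=G-M_1$. Since $\omega(G)$ is the minimum number of odd circuits taken over all $2$-factors, it suffices to show that $F$ has at most $2d$ odd circuits.

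The key step is a two-colouring of the edges of $F$ governed by $M_2$: colour an edge of $F$ \emph{red} if it lies in $M_2$ and \emph{blue} otherwise (so a blue edge belongs to neither matching). I would then carry out a purely local analysis at each vertex $v$. If $v$ is not an endpoint of a common edge, then its three edges split as one edge of $M_1$, one edge of $M_2$, and one edge in neither; deleting the $M_1$-edge leaves $v$ incident in $F$ with exactly one red and one blue edge. If $v$ is an endpoint of a common edge $e\in R$, then $e$ is simultaneously its $M_1$-edge and its $M_2$-edge, so the two remaining edges lie in neither matching and $v$ is incident in $F$ with two blue edges. Thus the only vertices at which two edges of the same colour meet are the $2d$ endpoints of $R$.

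From this local picture the global conclusion follows quickly. Any circuit of $F$ that avoids every endpoint of $R$ meets one red and one blue edge at each of its vertices, so its colours alternate around the circuit, forcing its length to be even. Equivalently, every \emph{odd} circuit of $F$ must contain at least one endpoint of a common edge. Since $R\subseteq M_1$ is a matching it has exactly $2d$ endpoints, and the circuits of $F$ are pairwise vertex-disjoint, so at most $2d$ circuits of $F$ can be odd. Hence $\omega(G)\le 2d=2\den(G)$.

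I expect the only delicate point to be the local analysis at the endpoints of common edges: one must check carefully that at such a vertex both surviving edges of $F$ are blue, rather than one of them being a further $M_2$-edge, because it is precisely this configuration that permits an odd circuit to form and that pins each odd circuit to a common edge. Everything else---the reduction to a single $2$-factor and the alternation argument---is routine once this colour pattern has been established.
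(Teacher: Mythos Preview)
Your proof is correct and follows essentially the same approach as the paper: both pass to the $2$-factor $F=G-M_1$, argue that every odd circuit of $F$ must meet $M_1\cap M_2$, and then count. The only cosmetic difference is that the paper phrases the key step as a parity argument (an odd vertex set sends out an edge of $M_2$, which is then forced into $M_1\cap M_2$), whereas you obtain the same conclusion via your red/blue alternation; the bookkeeping and the final bound are identical.
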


\begin{proof}
Let $M_1$ and $M_2$ be any two perfect matchings of $G$. Take
the $2$-factor $F_1$ complementary to $M_1$, and assume that it
has $c$ odd circuits. Since every set with an odd number of
vertices sends out an edge of $M_2$, each odd circuit of $F_1$
is incident with at least one edge from $M_1\cap M_2$. If $E'$
denotes the set of all edges of $M_1\cap M_2$ incident with an
odd circuit of $F_1$, then clearly $|E'|\geq c/2\geq \omega/2$,
where $\omega=\omega(G)$. Consequently, $|M_1\cap M_2|\geq|E'|
\geq \omega/2$ for each pair $M_1$ and $M_2$ of perfect
matchings of $G$, and so $\den(G) = \min_{M_1,M_2}|M_1\cap M_2|
\geq \omega/2$.
\end{proof}

Now we are ready for the inequality on the right-hand side of
\eqref{eq:steffen}.

\begin{proposition} If $G$ is a snark, then
\[
\df{G}\geq 2\den(G)+1.
\]
\end{proposition}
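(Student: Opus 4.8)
The plan is to start from an optimal $3$-array $\mathcal{M}=\{M_1,M_2,M_3\}$, so that $\df{G}=|E_0(\mathcal{M})|$, and to bound this quantity below in terms of $\den(G)$. Writing $E_i=E_i(\mathcal{M})$, Proposition~\ref{prop:core}(iii) gives $|E_0|=|E_2|+2|E_3|$. First I would count the pairwise intersections of the three matchings: a doubly covered edge lies in exactly one of $M_1\cap M_2$, $M_1\cap M_3$, $M_2\cap M_3$, while a triply covered edge lies in all three, whence
\[
\sum_{1\le i<j\le 3}|M_i\cap M_j|=|E_2|+3|E_3|.
\]
Since $\den(G)\le|M_i\cap M_j|$ for every pair, summing the three inequalities yields $3\den(G)\le|E_2|+3|E_3|$, and because $\den(G)$ and $|E_3|$ are integers this sharpens to $\den(G)\le|E_3|+\lfloor|E_2|/3\rfloor$.

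Combining this with $|E_0|=|E_2|+2|E_3|$ gives
\[
\df{G}-2\den(G)\ge\bigl(|E_2|+2|E_3|\bigr)-2\bigl(|E_3|+\lfloor|E_2|/3\rfloor\bigr)=|E_2|-2\lfloor|E_2|/3\rfloor.
\]
A direct check of the three residues modulo $3$ shows that $|E_2|-2\lfloor|E_2|/3\rfloor\ge 1$ whenever $|E_2|\ge 1$, so $\df{G}\ge 2\den(G)+1$ as soon as the optimal core contains a single doubly covered edge. The entire difficulty is thus concentrated in the degenerate case $|E_2|=0$, which is the step I expect to be the main obstacle.

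To dispose of $|E_2|=0$ I would argue structurally. If no edge is doubly covered, then by Proposition~\ref{prop:core}(ii) the core has no $2$-valent vertex, so by part~(i) every component of $\core(\mathcal{M})$ is a cubic graph in which $E_3$ is a perfect matching and $E_0$ the complementary $2$-factor. Each such component already has all its vertices of degree $3$ inside the core, so no edge of $G$ leaves it; as $G$ is connected and the core is nonempty, the core must be the whole of $G$. Consequently every edge is either triply covered or uncovered, which forces $M_1=M_2=M_3=E_3$ for a single perfect matching $E_3$, and $|E_0|=2|E_3|$. Finally, since $G$ is bridgeless and cubic it has a perfect matching distinct from $E_3$ (bridgeless cubic graphs are matching covered), and two distinct perfect matchings share fewer than $|E_3|$ edges; hence $\den(G)\le|E_3|-1$ and $\df{G}=2|E_3|\ge 2\den(G)+2$. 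The point demanding care is precisely this degenerate case: the reduction of the cubic core to all of $G$ via connectivity is what collapses it to three copies of one matching and makes the final inequality transparent.
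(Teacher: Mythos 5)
Your proof is correct, and while it is built on the same intersection identity as the paper's, its logical structure is genuinely different. The paper first uses optimality in an essential way: if the core of an optimal $3$-array had no doubly covered edge, then $M_1=M_2=M_3$, and replacing $M_1$ by a perfect matching through an uncovered edge (Plesn\'ik's theorem) would strictly decrease the number of uncovered edges --- a contradiction. Having secured one doubly covered edge, say contributing to $M_1\cap M_2$, the paper computes directly: $\df{G}=|E_2|+2|E_3|=\bigl(\sum_{i<j}|M_i\cap M_j|\bigr)-|E_3|\geq 1+|M_1\cap M_3|+|M_2\cap M_3|\geq 2\den(G)+1$. You instead average the identity $\sum_{i<j}|M_i\cap M_j|=|E_2|+3|E_3|$ over all three pairs and exploit integrality, which disposes of every case with $|E_2|\geq 1$; and you confront the degenerate case $|E_2|=0$ head-on rather than excluding it by optimality. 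There your structural argument --- the core has no $2$-valent vertices, so its components are cubic and hence closed, so connectivity forces $\core(\mathcal{M})=G$ and $M_1=M_2=M_3=E_3$ --- is exactly the justification the paper compresses into the terse phrase ``which implies that $M_1=M_2=M_3$,'' so you have in fact made the paper's most elliptical step explicit. You then finish with the matching-covered property (again Plesn\'ik, but used to produce a second distinct perfect matching rather than to contradict optimality), getting $\df{G}=2|E_3|\geq 2\den(G)+2$ in that case. The trade-off: the paper's route is shorter because optimality annihilates the degenerate case outright, whereas your route never uses optimality except to write $\df{G}=|E_0(\mathcal{M})|$, so it actually establishes the slightly stronger per-array statement that \emph{every} $3$-array of a snark leaves at least $2\den(G)+1$ edges uncovered (at least $2\den(G)+2$ when no edge is doubly covered).
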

\begin{proof}
Let $\mathcal{M}=\{M_1, M_2, M_3\}$ be an optimal $3$-array of
$G$. Since $G$ is a snark, $\core(\mathcal{M})$ is nonempty. We
claim that $\core(\mathcal{M})$ contains at least one doubly
covered edge. Suppose not. Then $\core(\mathcal{M})$ consists
of uncovered and triply covered edges, which implies that
$M_1=M_2=M_3$. Pick an uncovered edge $e$ and take a perfect
matching $M_1'$ containing~$e$; it is well known that such a
perfect matching always exists \cite{Plesnik}. Clearly, the
$3$-array $\{M_1',M_2,M_3\}$ has fewer uncovered edges than
$\mathcal{M}$, so $\mathcal{M}$ was not optimal. Thus, if
$\mathcal{M}$ is optimal, there exist indices $i\ne j$ such
that $|M_i\cap M_j|-|E_3|\geq 1$; without loss of generality we
may assume that $|M_1\cap M_2|-|E_3|\geq 1$. By applying
Proposition~\ref{prop:core}~(iii) we obtain
\begin{align*}
\df{G}&=|E_2|+2|E_3|=\left(\sum_{i\neq j}|M_i\cap M_j|\right)-|E_3|
\geq |M_1\cap M_3| + |M_2\cap M_3| + 1\\ &\geq 2\den(G)+1,
\end{align*}
as required.
\end{proof}

Proposition~\ref{prop:core}~(iv) implies that
$\df{G}\ge\rho(G)$ for every bridgeless cubic graph $G$. Jin
and Steffen \cite[Corollary~2.4]{JS} proved the following
stronger result.

\begin{theorem}\label{thm:oddness}
For every bridgeless cubic graph $G$ one has
$$\df{G}\geq 3\omega(G)/2.$$
\end{theorem}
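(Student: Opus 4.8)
The plan is to bound the oddness from above in terms of the defect. If $G$ is $3$-edge-colourable then $\omega(G)=0$ and the inequality is trivial, so assume $G$ is a snark and fix an \emph{optimal} $3$-array $\mathcal M=\{M_1,M_2,M_3\}$, so that the number of uncovered edges equals $\df{G}=|E_0|$. For $i=1,2,3$ write $F_i=G-M_i$ for the complementary $2$-factor and let $a_i$ be the number of odd circuits of $F_i$. Since the uncovered edges lie in every $F_i$ and since oddness is the minimum number of odd circuits over all $2$-factors, $\omega(G)\le\min_i a_i\le\tfrac13(a_1+a_2+a_3)$. It therefore suffices to prove the single inequality $a_1+a_2+a_3\le 2\,\df{G}$, which then gives $\omega(G)\le\tfrac23\df{G}$, that is, $\df{G}\ge\tfrac32\omega(G)$.

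The first ingredient is that every odd circuit of any $F_i$ contains an uncovered edge. Consider a circuit $C$ of $F_3$; its edges carry no colour $3$, so each edge of $C$ receives a colour-list from $\{\emptyset,1,2,12\}$, and by the defining property of a $3$-array each of the colours $1$ and $2$ appears exactly once among the three edges at every vertex of $C$. If $C$ had a doubly covered edge $xy$ with list $12$, then at $x$ both colours $1$ and $2$ already sit on $xy$, forcing the other $C$-edge at $x$ to be uncovered. Hence, if $C$ has no uncovered edge it has no doubly covered edge either, so all its edges are simply covered; the colour condition then makes the two $C$-edges at each vertex carry different colours from $\{1,2\}$, so colours $1$ and $2$ alternate around $C$ and $|C|$ is even. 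Consequently an odd $C$ must contain an uncovered edge, and the same argument applies to $F_1$ and $F_2$.

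The heart of the proof is the bound $a_1+a_2+a_3\le 2|E_0|$. For this I would use the following parity identity. For a circuit $C$ of $F_i$ and any $j\ne i$, the colour-$j$ edges lying on $C$ form a matching of $C$, and a vertex of $C$ is missed by this matching exactly when its colour-$j$ edge is its unique edge not on $C$, namely the $M_i$-edge at that vertex; this happens precisely when the vertex is incident with an edge of $M_i\cap M_j$. Since a matching misses $|C|-2\cdot(\text{size})$ of the vertices, $|C|\equiv n_{ij}(C)\pmod 2$, where $n_{ij}(C)$ denotes the number of vertices of $C$ incident with $M_i\cap M_j$; thus $C$ is odd if and only if $n_{ij}(C)$ is odd. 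Summing these characterisations over the circuits of the three $2$-factors expresses $a_1+a_2+a_3$ through the pairwise intersections $M_i\cap M_j$, whose edges are exactly the doubly covered edges $E_2$ together with the triply covered edges $E_3$; combined with the identity $|E_0|=|E_2|+2|E_3|$ of Proposition~\ref{prop:core}(iii), this should yield $a_1+a_2+a_3\le 2|E_0|$.

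I expect the delicate point, and the main obstacle, to be precisely the treatment of the triply covered edges. Each edge of $E_3$ lies in all three matchings and hence is counted in every pairwise intersection $M_i\cap M_j$, so the crude estimate $a_i\le 2|M_i\cap M_j|$ yields only $a_1+a_2+a_3\le 2|E_0|+2|E_3|$, which is too weak by exactly $2|E_3|$. The Petersen graph, for which every $F_i$ is a pair of $5$-cycles, shows that the target inequality is tight ($6\le 6$), so no slack may be wasted. To remove the surplus $2|E_3|$ I would set up a bipartite charging between the odd circuits across all three $F_i$ and the uncovered edges, joining each odd circuit to the uncovered edges it contains, and show via the parity identity above that these circuits can be assigned to uncovered edges so that every uncovered edge receives at most two of them; equivalently, that every family of $k$ such odd circuits collectively meets at least $k/2$ uncovered edges. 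Establishing this Hall-type condition, with the $E_3$-edges accounted for correctly, is where the real work lies.
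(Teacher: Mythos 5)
Your global strategy coincides with the paper's: fix an optimal $3$-array, let $a_i$ count the odd circuits of the complementary $2$-factors $F_i$, and reduce the theorem to $a_1+a_2+a_3\le 2\df{G}$. Your preliminary facts are correct (every odd circuit of $F_i$ contains an uncovered edge; the parity identity $|C|\equiv n_{ij}(C)\pmod 2$ is a valid consequence of $M_j\cap E(C)$ being a matching of $C$), and you correctly diagnose that the naive count via pairwise intersections overshoots by exactly $2|E_3|$. But the proof has a genuine gap precisely where you say the real work lies: the Hall-type charging of odd circuits to uncovered edges is never established, and it is not a routine verification. Worse, the pointwise version you float first (``every uncovered edge receives at most two of them'') is simply false: in the Petersen graph with the optimal array whose core is a $6$-cycle, each uncovered edge lies in all three $2$-factors, and in each $F_i$ both circuits are $5$-cycles, so a single uncovered edge lies on \emph{three} odd circuits. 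Only the collective deficiency condition (every $k$ odd circuits jointly meet at least $k/2$ uncovered edges) could work, and since the Petersen example makes the inequality tight with total capacity exactly equal to the number of circuits, there is no slack with which to prove it by soft counting; some structural case analysis is unavoidable.

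The paper supplies exactly that analysis, and it charges to different objects than you propose. It partitions the odd circuits of each $F_i$ into: $\mathcal{C}^1$, circuits of the core all of whose edges are uncovered (these are components of all three $F_i$ simultaneously, so they are counted three times, and each is charged to its at least three \emph{special} vertices, i.e.\ vertices incident with a triply covered edge, giving $3|\mathcal{C}^1|\le|S|=2|E_3|$); $\mathcal{C}_i^2$, core circuits containing a doubly covered edge (shown to be pairwise disjoint across all three factors, each containing a special vertex, giving $\sum_i|\mathcal{C}_i^2|\le 2|E_3|$); and $\mathcal{C}_i^3$, circuits not contained in the core, each of which is shown to produce at least two edges of $E_2\cap M_i$ (the $M_i$-edges at the endpoints of a core-path on the circuit), with each such edge serving at most two circuits, giving $\sum_i|\mathcal{C}_i^3|\le 2|E_2|$. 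Summing yields $a_1+a_2+a_3\le 2|E_2|+4|E_3|=2\df{G}$. Note that the charging targets are special vertices and edges of $E_2\cap M_i$, not uncovered edges; this is what lets the $E_3$ surplus be absorbed. To complete your argument you would need to reconstruct essentially this trichotomy (or an equivalent one), so as written the proposal is an incomplete proof, not a correct alternative route.
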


\begin{proof}
Let $\mathcal{M} = \{M_1, M_2, M_3\}$ be an optimal $3$-array
of $G$, and for $i\in\{1,2,3\}$ let $F_i$ be the $2$-factor
$F_i$ complementary to $M_i$. Our aim is to estimate the number
of odd circuits in each $F_i$ and then use the estimate to
bound the oddness of $G$.

For each $i\in\{1,2,3\}$ we partition the set of odd circuits
of $F_i$ into three subsets $\mathcal{C}_i^1$,
$\mathcal{C}_i^2$, and $\mathcal{C}_i^3$ as follows:
\begin{itemize}
\item[(i)] $\mathcal{C}_i^1$ will consist of all odd
    circuits of $F_i$ contained in $\core(\mathcal{M})$ in
    which all edges are uncovered;
\item[(ii)] $\mathcal{C}_i^2$ will consist of all odd
    circuits of $F_i$ contained in $\core(\mathcal{M})$
    which contain at least one doubly covered edge; and
\item[(iii)] $\mathcal{C}_i^3$ will consist of all odd
    circuits not contained in $\core(\mathcal{M})$.
\end{itemize}

Observe that the edges leaving a circuit $C$ from
$\mathcal{C}_i^1$ are all triply covered. In other words,
$\mathcal{C}_1^1=\mathcal{C}_2^1=\mathcal{C}_3^1$, so for
simplicity we write $\mathcal{C}_i^1=\mathcal{C}^1$. A vertex
of $G$ incident with a triply covered edge will be called
\textit{special}. Next, each circuit $C$ from any
$\mathcal{C}_i^2$ consists of uncovered edges and doubly
covered edges, and since $C$ is odd, at least two uncovered
edges of $C$ must be adjacent. It follows that each
$C\in\mathcal{C}_i^2$ has at least one special vertex.

At first we derive a bound on $|\mathcal{C}_i^3|$. Pick an
arbitrary circuit $C\in\mathcal{C}_i^3$; since $C$ is odd, it
contains an edge of $\core(\mathcal{M})$. Consider a component
of the intersection of circuit $C$ and $\core(\mathcal{M})$,
which must be a path $P\subseteq C$. Let $u$ and $v$ be the
endvertices of $P$, and let $e$ and $f$ be the edges of $M_i$
incident with $u$ and $v$, respectively. By
Proposition~\ref{prop:girth}, $e$ and $f$ cannot be simply
covered, because each of them is adjacent to an edge of
$\core(\mathcal{M})\cap C$ and to a simply covered edge of $C$.
As both $e$ and $f$ are covered, they must be doubly covered
and hence belong to $E_2\cap M_i$. In other words, each $C\in
\mathcal{C}_i^3$  produces at least two edges from $E_2\cap
M_i$.

Form an auxiliary graph $X_i$ with bipartition
$\{\mathcal{C}_i^3,E_2\cap M_i\}$, where $C\in {\mathcal
C}_i^3$ is joined to $e\in E_2\cap M_i$ whenever $e$ is
incident with $C$. As previously explained, $\deg(C)\geq 2$ for
each $C\in\mathcal{C}_i^3$ while $\deg(e)\leq 2$ for each $e\in
E_2\cap M_i$. Hence, counting the edges of $X_i$ in two ways
yields
\[
2|\mathcal{C}_i^3|\le\sum_{C\in\mathcal{C}_i^3}\deg_{X_i}(C)=|E(X_i)|=
\sum_{e\in E_2\cap M_i}\deg_{X_i}(e)\le 2|E_2\cap M_i|.
\]
It follows that $|\mathcal{C}_i^3|\le |E_2\cap M_i|$ and hence
\begin{equation}\label{eq:c3}
\sum_{i=1}^3 |\mathcal{C}_i^3|\leq 2|E_2|.
\end{equation}

Now we bound $|\mathcal{C}^1|$. Let $S$ denote the set of
special vertices of $G$ with regard to $\mathcal{M}$. By
Proposition~\ref{prop:core}(ii), $|S|=2|E_3|$. Since each
vertex in a circuit from $\mathcal{C}^1$ is special, each
circuit from $\mathcal{C}^1$ has at least three special
vertices. Moreover, any two circuits from $\mathcal{C}^1$ are
disjoint. Therefore
\begin{equation}\label{eq:c1}
|\mathcal{C}^1|\leq |S|/3=2|E_3|/3.
\end{equation}

Lastly, we deal with $\mathcal{C}_i^2$. Every circuit from
$\mathcal{C}_i^2$ contains at least one pair of adjacent
uncovered edges, and therefore at least one special vertex. We
further show that any two circuits from
$\mathcal{C}_1^2\cup\mathcal{C}_2^2\cup\mathcal{C}_3^2$ are
disjoint. Suppose that this is false and two circuits
$C\in\mathcal{C}_i^2$ and $D\in\mathcal{C}_j^2$ have a nonempty
intersection. Then there exists an edge $e$ in $C\cap D$, which
is adjacent to two edges $f$ and $g$, such that $f$ lies in $C$
but not in $D$ and $g$ lies in $D$ but not in $C$. Since $f$ is
contained in $C$, it is uncovered or doubly covered. At the
same time, $f$ leaves~$D\in\mathcal{C}_j^2$, so it is simply or
triply covered, which is clearly impossible. Therefore $C\cap
D=\emptyset$. Hence
\begin{equation}\label{eq:c2}
\sum_{i=1}^3|\mathcal{C}_i^2|\leq |S|=2|E_3|.
\end{equation}

Summing up, if we denote the number of odd circuits in the
$2$-factor $F_i$ by $\omega_i$, from
\eqref{eq:c3}-\eqref{eq:c2} we obtain
\begin{equation}\label{eq:sum}
3\omega(G)\leq\sum_{i=1}^3\omega_i= 3|\mathcal{C}^1|+
\sum_{i=1}^3|\mathcal{C}_i^2|+
\sum_{i=1}^3|\mathcal{C}_i^3|\leq 4|E_3|+2|E_2|.
\end{equation}
By Proposition~\ref{prop:core}~(iii), the right-hand side of
\eqref{eq:sum} equals $2\df{G}$, and the theorem follows.
\end{proof}

The next result is due to Steffen~\cite[Corollary~2.5]{S2}.

\begin{proposition}\label{prop:girth}
For every snark $G$ one has $\df{G}\ge\lceil\girth(G)/2\rceil$.
\end{proposition}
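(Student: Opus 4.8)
The plan is to fix an optimal $3$-array $\mathcal{M}=\{M_1,M_2,M_3\}$ of $G$, so that $\df{G}=|E_0(\mathcal M)|$ is precisely the number of uncovered edges, and then to exhibit a single circuit of $G$ of which at least half the edges are uncovered. Writing $E_i=E_i(\mathcal M)$ as in the excerpt, the central object I would introduce is the spanning subgraph $H$ of $G$ whose edge set is $E_0\cup E_2$, that is, the core with all triply covered edges deleted.

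The first step is to verify that $H$ is $2$-regular. By Proposition~\ref{prop:core}~(ii), every $2$-valent vertex of $\core(\mathcal M)$ is incident with one edge of $E_2$ and one of $E_0$, hence has degree $2$ in $H$; every $3$-valent vertex is incident with one edge of $E_3$ (deleted from $H$) and two edges of $E_0$, hence again has degree $2$ in $H$; and vertices outside the core are isolated in $H$. Thus every component of $H$ is a circuit. Since $G$ is a snark we have $\df{G}\ge 3>0$, so $E_0\ne\emptyset$ and $H$ contains at least one circuit; I would fix such a circuit $D$, of length $\ell$. Because $D$ is also a circuit of $G$, we have $\ell\ge\girth(G)$.

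Next I would count the uncovered edges of $D$. The key observation is that an edge of $E_2$ cannot meet a $3$-valent vertex of the core, since at such a vertex the three core edges are one edge of $E_3$ and two edges of $E_0$; hence both endvertices of every $E_2$-edge are $2$-valent core vertices. Let $a$ and $b$ denote the numbers of $2$-valent and $3$-valent core vertices lying on $D$, so $\ell=a+b$. A double count of the incidences between the $E_2$-edges of $D$ and the $2$-valent core vertices of $D$ -- each such vertex meets exactly one $E_2$-edge of $D$, and each $E_2$-edge of $D$ meets exactly two such vertices -- shows that $D$ carries exactly $a/2$ edges of $E_2$, and therefore $\ell-a/2=a/2+b$ uncovered edges.

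Finally, since $b\ge 0$ the number of uncovered edges on $D$ satisfies $a/2+b\ge(a+b)/2=\ell/2\ge\girth(G)/2$, and it is a positive integer because $a$ is even; hence $\df{G}=|E_0|\ge a/2+b\ge\lceil\girth(G)/2\rceil$, as required. The only genuinely delicate points are confirming that $H$ is $2$-regular and that $E_2$-edges avoid the $3$-valent core vertices -- both immediate from Proposition~\ref{prop:core}~(ii) -- after which the bound reduces to the short double count above.
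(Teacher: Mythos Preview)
Your proof is correct and follows essentially the same line as the paper's: pick an optimal $3$-array, locate a circuit inside the core, and show that at least half of its edges are uncovered. The only cosmetic difference is that you first delete the triply covered edges to obtain a genuinely $2$-regular subgraph and then do an explicit double count with the parameters $a$ and $b$, whereas the paper works directly with a cycle $K$ in the full core and simply observes (via Proposition~\ref{prop:core}~(ii)) that at every vertex of $K$ at least one of the two $K$-edges lies in $E_0$, giving at least $\lceil q/2\rceil$ uncovered edges on $K$.
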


\begin{proof}
Let $\mathcal{M}$ be a optimal $3$-array and let $H$ be its
core. Since each vertex of $H$ is either $2$-valent or
$3$-valent, $H$ contains a cycle $K$. Let $q$ be the length of
$K$. By Proposition~\ref{prop:core}~(ii), at least $\lceil
q/2\rceil$ edges of $K$ are left uncovered. Hence,
$$ \df{G}\ge \lceil q/2\rceil \ge \lceil\girth(G)/2\rceil,$$
as claimed.
\end{proof}

\section{Main result}
\noindent{}In this section we show that there exist nontrivial
snarks with oddness $2$ and arbitrarily large defect. As a
consequence, nontrivial snarks with oddness $2$ are split into
infinitely many subclasses according to their defect.

The proof makes use of the method of superposition, introduced
by Kochol in \cite{Ko}, whose main idea is to `inflate' a given
snark $G$ into a large cubic graph $\tilde G$ by substituting
vertices of $G$ with `fat vertices' (tripoles), called
\emph{supervertices}, and edges of $G$ with `fat edges'
(dipoles), called \emph{superedges}. Under suitable conditions
the inflated graph $\tilde G$ is a snark. For formal
definitions and a detailed description of the method we refer
the interested reader to the original paper \cite{Ko} or to a
recent paper \cite{MS-superp}. Our proof is self-contained.

\begin{theorem}\label{thm:snarxlargegirth}
There exist nontrivial snarks of oddness $2$ with arbitrarily
large defect.
\end{theorem}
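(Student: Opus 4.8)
The plan is to prove Theorem~\ref{thm:snarxlargegirth} by combining the inequality of Proposition~\ref{prop:girth} with an explicit construction of snarks that simultaneously have oddness exactly~$2$ and arbitrarily large girth. Indeed, since $\df{G}\ge\lceil\girth(G)/2\rceil$ for every snark, a single family of nontrivial snarks $G_1,G_2,\ldots$ with oddness $2$ and girth tending to infinity immediately yields the desired family with oddness $2$ and arbitrarily large defect. Thus the whole theorem reduces to producing, for each target girth $g$, a nontrivial (cyclically $4$-edge-connected, girth $\ge 5$) snark of oddness~$2$ whose girth is at least~$g$; raising the girth then forces the defect up through Proposition~\ref{prop:girth}.

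First I would fix a starting nontrivial snark of oddness $2$ and describe the superposition apparatus, following Kochol~\cite{Ko}. The idea is to replace each vertex of a base snark by a \emph{supervertex} (a tripole) and each edge by a \emph{superedge} (a dipole), where the building blocks are chosen to be colourable multipoles whose colourings are constrained by the Parity Lemma (Lemma~\ref{lem:par}). The crux of Kochol's technique is to select the superedges so that the ``proper'' colouring states that would extend to a $3$-edge-colouring of the whole graph are exactly the ones forbidden, forcing uncolourability, while local colourings still exist so that oddness stays controlled. To obtain oddness precisely~$2$ rather than larger, I would arrange that a single pair of odd circuits remains in an otherwise $2$-factorisable inflation: concretely, one exhibits a $2$-factor of $\tilde G$ with exactly two odd circuits (giving $\omega(\tilde G)\le 2$), and one shows $\tilde G$ is a snark (so $\omega(\tilde G)\ge 2$ since oddness is even and nonzero for bridgeless cubic non-$3$-edge-colourable graphs). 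I would build the blocks from large girth-increasing gadgets (long paths/subdivisions within the supervertices and superedges) so that every short circuit of $\tilde G$ is destroyed; the girth of $\tilde G$ grows with the size of these gadgets.

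The key steps, in order, are: (1) specify the tripole and dipole building blocks and their admissible colouring states, recording the Parity-Lemma constraints; (2) prove that the inflated graph $\tilde G$ admits no $3$-edge-colouring, i.e.\ it is a snark, by arguing that any global colouring would induce forbidden states on the superedges; (3) verify nontriviality, namely cyclic $4$-edge-connectivity and girth $\ge 5$ (in fact girth $\ge g$) of $\tilde G$, by analysing how edge cuts and short circuits of $\tilde G$ must interact with the blocks; (4) exhibit an explicit $2$-factor of $\tilde G$ with exactly two odd circuits and conclude $\omega(\tilde G)=2$; and finally (5) let the gadget size grow so that $\girth(\tilde G)\to\infty$, and invoke Proposition~\ref{prop:girth} to get $\df{\tilde G}\to\infty$.

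The main obstacle I expect is step~(4) coupled with step~(2): keeping the oddness pinned at the minimum value~$2$ while enlarging the blocks enough to drive the girth (and hence the defect) to infinity. It is easy to raise girth by inflating with large gadgets, and easy to retain uncolourability, but these two demands pull against low oddness, since large sparse inflations tend to create many odd circuits in every $2$-factor. The delicate part is therefore the simultaneous bookkeeping: designing the supervertices and superedges so that there is a $2$-factor whose complement (a perfect matching) leaves precisely two odd circuits, no matter how large the girth-controlling parameter is. A secondary technical point is confirming cyclic $4$-edge-connectivity (and the sharper $5$-edge-connectivity promised in the introduction), which requires a careful cut analysis showing that every small cycle-separating edge cut of $\tilde G$ comes from a corresponding cut of the base snark or from the interior of a block, and hence cannot be too small.
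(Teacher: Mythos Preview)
Your overall plan matches the paper's: construct, via Kochol-style superposition over the Petersen graph, cyclically $5$-edge-connected snarks $\tilde G$ of arbitrarily large girth, and then invoke Proposition~\ref{prop:girth} to force $\df{\tilde G}\to\infty$. Steps (1)--(3) and (5) of your outline are exactly what the paper does, including the use of proper dipoles built from Petersen pieces and girth-$g$ bipartite cubic graphs.

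The one genuine difference is your step~(4). You propose to pin $\omega(\tilde G)=2$ by \emph{exhibiting an explicit $2$-factor with exactly two odd circuits}, and you correctly flag this as the delicate point. The paper avoids this difficulty entirely: instead of hunting for a $2$-factor, it arranges the identifications of semiedges so that a specified pair $\{u,v\}$ of adjacent vertices of $\tilde G$ is non-removable, i.e.\ $\tilde G-\{u,v\}$ is $3$-edge-colourable. This is done by carrying an explicit partial $3$-edge-colouring through the construction (the superedge $F_g$ is coloured so that each connector receives colours $1,1,2$, and the pieces are glued so that colours match everywhere except at $u$ and $v$). From this one gets $\rho(\tilde G)=2$, and then the standard fact $\rho(G)=2\Leftrightarrow\omega(G)=2$ (see \cite[Lemma~2.5]{S1}, recalled in Section~\ref{sec:otherinv}) yields $\omega(\tilde G)=2$. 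This resistance route is considerably easier to execute than tracking parities of circuits through large inflated blocks, and it is exactly what resolves the obstacle you anticipated.
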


\begin{proof}
To prove the theorem we modify the construction of snarks of
arbitrarily large girth due to Kochol \cite[Section~4]{Ko} in
such a way that a specified pair $\{u,v\}$ of adjacent vertices
of the resulting graph $\tilde G$ will be non-removable. This
fact will guarantee that $\omega(\tilde G)=2$ while $\df{\tilde
G}$ may take an arbitrarily large value, according to
Proposition~\ref{prop:girth}.

The key ingredient of our construction is a $(3,3)$-pole
$F=F_g$ that contains no cycles of length smaller than $g$ for
any prescribed~$g\ge 6$. It is represented in
Figure~\ref{fig:superedge} together with a partial
$3$-edge-colouring of its edges; the subgraphs indicated in
Figure~\ref{fig:superedge} as $M_g$ are copies of a $5$-pole
obtained from a suitable cubic graph $L_g$ of girth $g$ by
removing a path of length $2$. The $(3,3)$-pole $F_g$ will
serve as a superedge in our construction. It will be built in
several steps.

\begin{figure}[h]
\centering
\includegraphics[scale=1.2]{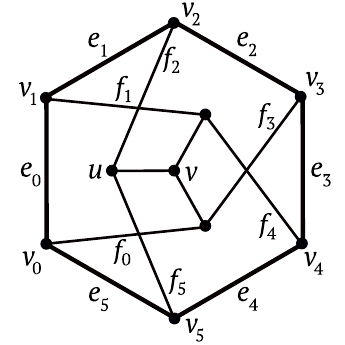}
\caption{The base graph for the superposition construction
in Theorem~\ref{thm:snarxlargegirth}}
\label{fig:core3}
\end{figure}
We start the construction of $F_g$ by taking three copies of
the Petersen graph, denoted by $P_1$, $P_2$, and $P_3$. In each
$P_i$ with $i\in\{1,3\}$ we choose a set $\{u_i,v_i,w_i\}$ of
three vertices at distance $2$ from each other, and in $P_2$ we
choose two edges $x_1x_2$ and $x_3x_4$ such that their
endvertices, if taken from distinct edges, are again at
distance~$2$ from each other. (For example, one can take
$\{u_i,v_i,w_i\}=\{v_0,v_2,v_4\}$, $x_1x_2=e_0$, and
$x_3x_4=e_3$, see Figure~\ref{fig:core3}.) It is important
that $\{u_i,v_i,w_i\}$, with $i\in\{1,3\}$, and
$\{x_1,x_2,x_3,x_4\}$ are \emph{decycling} sets, which
means that the removal of any of them from the Petersen graph
leaves an acyclic subgraph. We construct a new graph $K$ from
$P_1\cup (P_2-\{x_1x_2,x_3x_4\})\cup P_3$ as follows:
for $i\in\{1,3\}$ we create a new vertex $z_i$ by identifying
$v_i$ with $x_i$, and new vertex $z_{i+1}$ by identifying $w_i$
with $x_{i+1}$, thereby producing four $5$-valent vertices
$z_1$, $z_2$, $z_3$, and $z_4$. The result is shown in
Figure~\ref{fig:3pet5.30}. Set $W=\{z_1,z_2,z_3,z_4\}$ and
$U=W\cup\{u_1, u_3\}$. Note that $U$ is a decycling set
for~$K$.
\begin{figure}[h]
\centering
\subfigure[A graph $K$ with no nowhere-zero
$\mathbb{Z}_2\times\mathbb{Z}_2$-flow]{
{\includegraphics[width=0.75\textwidth]{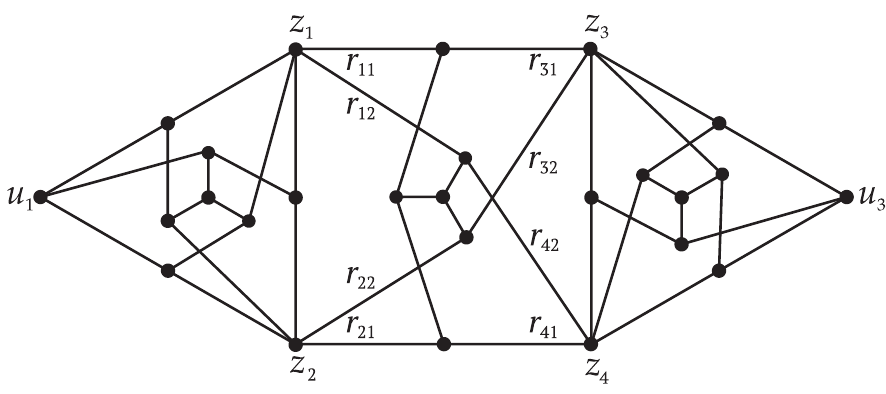}
\label{fig:3pet5.30}}
}
\subfigure[A proper $(3,3)$-pole $F_g$ of girth $g$]{
{\includegraphics[width=0.75\textwidth]{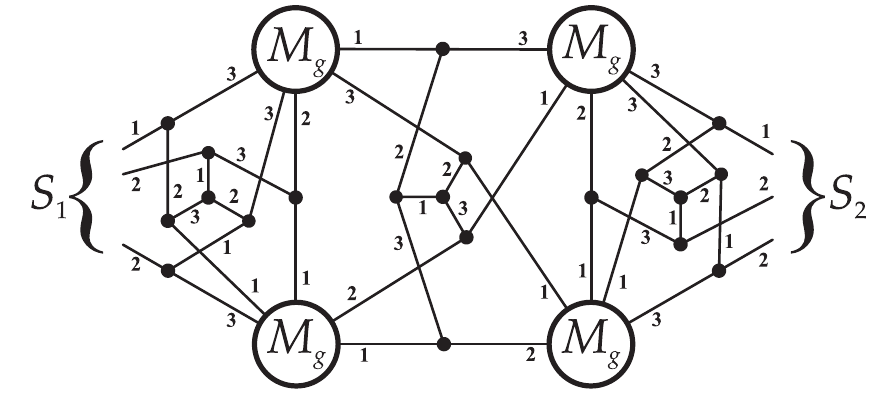}
\label{fig:superedge}}
}
\caption{Main ingredients of the construction of $\tilde G$.}
\end{figure}
We show that $K$ admits no nowhere-zero
$\mathbb{Z}_2\times\mathbb{Z}_2$-flow. Suppose to the contrary
that $\sigma$ is a nowhere-zero
$\mathbb{Z}_2\times\mathbb{Z}_2$-flow on $K$. For
$i\in\{1,2,3,4\}$ let $r_{i1}$ and $r_{i2}$ denote the edges
joining the vertex $z_i$ of $K$ to vertices of
$P_2-\{x_1,x_2,x_3,x_4\}$; see Figure~\ref{fig:3pet5.30}.
To derive a
contradiction we first prove that
$\sigma(r_{11})\ne\sigma(r_{12})$. If
$\sigma(r_{11})=\sigma(r_{12})$, then
$\sigma(r_{21})=\sigma(r_{22})$ as well, because the outflow
from every nonempty set of vertices is $0$, by the Kirchhoff
law. This in turn implies that the sum of flow values on the
three edges incident with $z_1$ and different from $r_{11}$ and
$r_{12}$ must be $0$ as well. Similarly, the sum of flow values
on the three edges incident with $z_2$ and different from
$r_{21}$ and $r_{22}$ must be $0$. It follows that $\sigma$
induces a nowhere-zero $\mathbb{Z}_2\times\mathbb{Z}_2$-flow on
$P_1$, which is a contradiction. Therefore
$\sigma(r_{11})\ne\sigma(r_{12})$. By analogous arguments we
can show that $\sigma(r_{i1})\neq \sigma(r_{i2})$ for each
$i\in \{1,2,3,4\}$. Moreover, the fact that the outflow from
every nonempty set of vertices is $0$ also implies that
$\sigma(r_{11})+\sigma(r_{12})=\sigma(r_{21})+ \sigma(r_{22})$
and $\sigma(r_{31})+\sigma(r_{32})= \sigma(r_{41})+
\sigma(r_{42})$. Thus if we take the induced valuation on
$P_2-\{x_1x_2,x_3x_4\}$ and assign the value
$\sigma(r_{11})+\sigma(r_{12})$ to the edge $x_1x_2$ and  the
value  $\sigma(r_{31})+\sigma(r_{32})$ to the edge $x_3x_4$, we
obtain a nowhere-zero $\mathbb{Z}_2\times\mathbb{Z}_2$-flow on
$P_2$. This is again a contradiction, so $K$ admits no
nowhere-zero $\mathbb{Z}_2\times\mathbb{Z}_2$-flow.

If we substitute every vertex $s\in W$ in $K$ with a copy of a
cubic $5$-pole $M$, identifying the dangling edges of $M$ with
those of $K-s$ arbitrarily, we obtain a cubic graph $K_M$.
Since any nowhere-zero $\mathbb{Z}_2\times\mathbb{Z}_2$-flow on
$K_M$ would induce one on $K$, the graph $K_M$ is a snark.

For any fixed girth $g$ we create $M=M_g$ from a connected
bipartite cubic graph $L_g$ of girth $g=2m\ge 6$ by removing a
path of length $2$ and retaining the dangling edges. Such a
graph $L_g$ indeed exists: Theorem~4.8 in \cite{NS-largepw}
guarantees that there exists an arc-transitive cubic graph $X$
of girth $g$. If $X$ is bipartite, we can set $L_g=X$. If $X$
is not bipartite, for $L_g$ we can take its bipartite double
(the direct product $X\times K_2$ with the complete graph $K_2$
on two vertices), which is connected, cubic, bipartite, and has
girth $g$. Note that $L_g$ is also $3$-edge-colourable, because
it is bipartite.

Since $U$ is a decycling set of $K$, this choice of $M$ gives rise 
to a snark in which each cycle of length smaller than $g$ traverses 
either $u_1$ or $u_3$. We now create a
$(3,3)$-pole $F_g(S_1,S_2)=K_M-\{u_1,u_3\}$ where the
connectors are formed from the three dangling edges formerly
incident with $u_1$ and $u_3$, respectively. Clearly, $F_g$
contains no cycles of length smaller than $g$. Furthermore, the
fact that $K_M$ is a snark implies that every
$3$-edge-colouring of $F_g$ assigns colours $a,b,b$ and $a,c,c$
to the edges constituting $S_1$ and $S_2$, respectively, for
certain $a,b,c\in\mathbb{Z}_2\times\mathbb{Z}_2-\{0\}$, not
necessarily distinct. In~other words, the \emph{total flow}
through $F_g$, by which we mean the sum  $a+b+b=a+c+c=a$, is
always different from~$0$. Any dipole with this property is
called \emph{proper}.

Finally, we construct the required snark $\tilde G$ of girth
$g=2m$. We take the Petersen graph $P$ as the base graph $G$
for superposition and pick a $6$-cycle $C=(e_0e_1\ldots e_5)$
in it. Let $v_i$ denote the common vertex of the edges
$e_{i-1}$ and $e_i$ with indices taken modulo $6$; see
Figure~\ref{fig:core3}. The edges of $P$ not on $C$ form a
spanning tree $T$. For further reference, let $v$ denote the
central vertex of $T$ and let $u$ be the neighbour of $v$ that
is adjacent to $v_2$ and $v_5$. We substitute each of the edges
$e_1$, $e_2$, $e_4$, and $e_5$ with a copy of the $(3,3)$-pole
$F_g$ and each of the vertices $v_0$, $v_1$, $v_3$, and $v_4$
with a copy of the $5$-pole $M_g$. In addition, we substitute
both $v_2$ and $v_5$ with a copy of the $(3,3,1)$-pole $Z$
which consists of a single vertex $z$ incident with three
dangling edges and of two additional isolated edges; the edges
incident with the vertex contribute to all three connectors of
$Z$ while each isolated edge contributes to two different
connectors of size~$3$, see Figure~\ref{fig:superpos}. Finally,
we join the connectors of each copy of $F_g$ to a connector of
a copy of $M_g$ and a connector of a copy of $Z$, and connect
the copies of $M_g$ between themselves and to the remaining
vertices of $P$ in such a way that a cubic graph $\tilde G$
arises.

Next we prove that $\tilde G$ is a snark. If $\tilde G$ was
$3$-edge-colourable, then it would admit a nowhere-zero
$\mathbb{Z}_2\times\mathbb{Z}_2$-flow $\xi$. Recall that
$\tilde G$ is obtained from the Petersen graph $P$ by
substituting each of the edges $e_1$, $e_2$, $e_4$, and $e_5$
with a copy of the $(3,3)$-pole $F_g$. Now we define an edge
valuation $\xi_*$ on $P$ in
$\mathbb{Z}_2\times\mathbb{Z}_2-\{0\}$ as follows. For each
edge $e$ of $P$ different from the four previously mentioned
edges set $\xi_*(e)=\xi(e)$; for each of the remaining four
edges set $\xi_*(e)$ to be the total flow through the
corresponding copy of $F_g$. Since $\xi$ fulfils the Kirchhoff
law, so does $\xi_*$. Thus $\xi_*$ is a
$\mathbb{Z}_2\times\mathbb{Z}_2$-flow. However, $F_g$ is a
proper dipole, so $\xi_*$ is a nowhere-zero
$\mathbb{Z}_2\times\mathbb{Z}_2$-flow on the Petersen graph,
which is absurd. Therefore $\tilde G$ is a snark. (For a more
detailed argument see \cite[Theorem~4]{Ko}.)

By inspecting Figure~\ref{fig:3pet5.30} it is easy to see that
every edge cut in the graph $K$ has at least three edges.
Recall that $L_g$ is vertex-transitive of girth $g\ge 6$, and
therefore it is cyclically $g$-edge-connected by Theorem~17 of
\cite{NS-cc}. Since $M_g$ arises from $L_g$ by removing a path
of length $2$, every edge cut of $F_g$ has at least three
edges, too. Now, let us look at the cycle-separating edge cuts
in $\tilde{G}$. If such a cut disconnects at least two copies
of $F_g$, then it has at least six edges. If, on the other
hand, it disconnects exactly one copy of $F_g$, then the cut
contains at least three edges in the copy of $F_g$ and at least
two edges outside of $F_g$. Finally, if the cut does not
intersect any of the copies of $F_g$, then it has at least five
edges because the Petersen graph is cyclically
$5$-edge-connected, and each copy of $M_g$ is connected and is
separated from the rest by five edges. Summing up, every
cycle-separating edge cut in $\tilde{G}$ has at least five
edges, in other words, $\tilde G$ is cyclically
$5$-edge-connected. In particular, $\tilde G$ is a nontrivial
snark.

We further show that $\girth(\tilde G)=g$. If we take into
account the fact that the dipole $F_g$ contains no cycles of
length smaller than $g$ and that $\{v_0,v_1,v_3,v_4\}$ is a
decycling set of $P$, we can conclude that $\girth(\tilde G)\ge
g$. However, Theorem~4.8 in \cite{NS-largepw} states that there
exist infinitely many arc-transitive cubic graphs $X$ of any
given girth $g\ge 6$. It follows that such graphs may have
arbitrarily large diameter, and hence infinitely many of them
contain at least two disjoint $g$-cycles. Therefore $M_g$ can
be constructed in such a way that it still contains a
$g$-cycle. Summing up, $\girth(\tilde G) = g$. From
Proposition~\ref{prop:girth} we now infer that $\df{\tilde
G}\ge g/2$.

\begin{figure}[h!]
\centering
\includegraphics[width=0.65\textwidth]{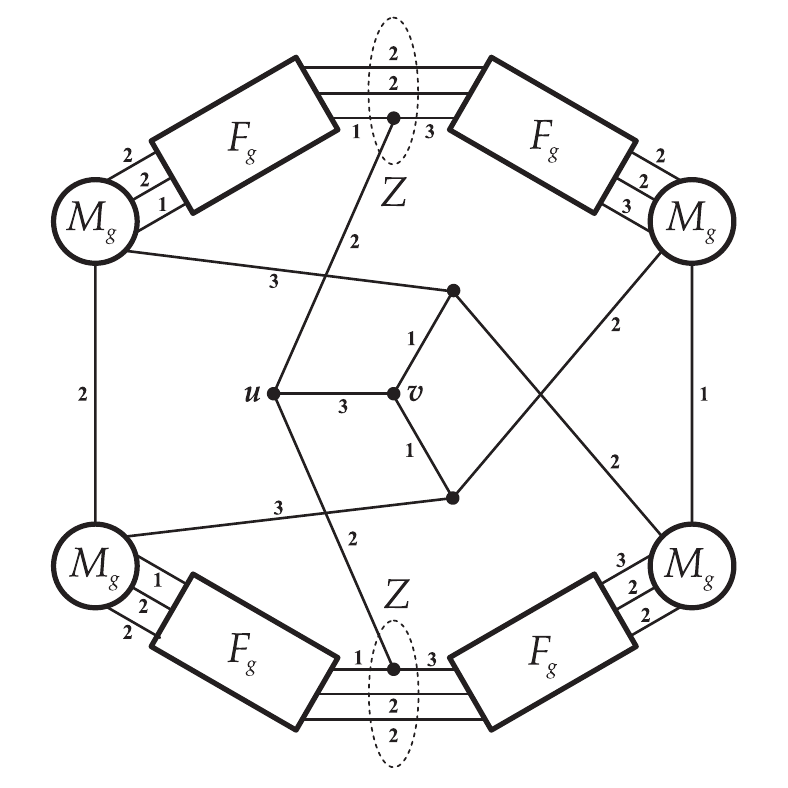}
\caption{The resulting snark $\tilde G$ of girth $g$}
\label{fig:superpos}
\end{figure}

Observe that our construction does not determine the snark
$\tilde G$ uniquely, because the order of semiedges in the
connectors is irrelevant for the result. We take this advantage
to show that the identification of the free ends of semiedges
in connectors can be performed in such a way that $\tilde
G-\{u,v\}$ is $3$-edge-colourable. For this purpose we first
extend the partial $3$-edge-colouring of $F_g$ shown in
Figure~\ref{fig:superedge} to the entire edge set. Recall that
$F_g$ was created from a bipartite cubic graph $L_g$ by
removing a path of length $2$, so $F_g$ is colourable. To make
the extension of the partial colouring possible we need to be
more specific about how the five dangling edges of $M_g$ are
joined to the five dangling edges of $K-s$ for every vertex
$s\in W$. To this end, it is sufficient to realise that by
Parity Lemma every $3$-edge-colouring of $M_g$ induces the
colour vector $aaabc$ where $a$, $b$, and $c$ are the three
nonzero elements of $\mathbb{Z}_2\times\mathbb{Z}_2$ in some
order. Although the edges receiving the lonely colours $b$ and
$c$ may not be chosen arbitrarily, we can always attach a
coloured copy of $M_g$ to $K-s$, possibly after permuting the
colours, in such a way that the colours of the corresponding
edges match. Hence, $F_g$ admits a $3$-edge-colouring where
each connector receives colours $1$, $1$, and $2$ as shown in
Figure~\ref{fig:superedge}. Finally, we insert the coloured
copies of $F_g$ and $M_g$ into $P$, possibly after permuting
the colours, in such a way that the colours of the edges in the
joined connectors again match. The result is a defective edge
colouring of $\tilde G$ where the Kirchhoff law fails only at
the vertices $u$ and $v$, see Figure~\ref{fig:superpos}. Thus
$\tilde  G-\{u,v\}$ is $3$-edge-colourable, and consequently,
the resistance of $\tilde G$ equals $2$. It follows that
$\omega(\tilde{G})=2$, as claimed. This completes the proof.
\end{proof}

The following interpretation of the previous proof is also
important, as one can see in our paper~\cite{KMNS-defect3}.

\begin{theorem}
There exist nontrivial snarks with arbitrarily large girth that
contain a non-removable pair of adjacent vertices.
\end{theorem}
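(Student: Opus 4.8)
The plan is to observe that the required snarks have in effect already been produced in the proof of Theorem~\ref{thm:snarxlargegirth}, so that little beyond a change of viewpoint is needed. Recall that for every even integer $g\ge 6$ that construction yields a cyclically $5$-edge-connected snark $\tilde G$ of girth exactly $g$; as $g$ may be taken arbitrarily large, this already supplies nontrivial snarks of arbitrarily large girth. It therefore remains only to exhibit in each such $\tilde G$ a pair of adjacent vertices whose deletion leaves a colourable graph.

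The natural candidate is the pair $\{u,v\}$ distinguished during the construction, where $v$ is the central vertex of the spanning tree $T$ of the Petersen base graph and $u$ is its neighbour adjacent to $v_2$ and $v_5$; both survive the substitution process and are genuine, mutually adjacent vertices of $\tilde G$. The fact to invoke is the closing assertion in the proof of Theorem~\ref{thm:snarxlargegirth}: exploiting the freedom in how the free ends of semiedges are identified within the connectors, the coloured copies of $F_g$ and $M_g$ can be inserted into $P$ so that $\tilde G$ carries a defective $3$-edge-colouring in which Kirchhoff's law fails \emph{only} at $u$ and $v$. Restricting this colouring to $\tilde G-\{u,v\}$ yields a proper $3$-edge-colouring, so $\tilde G-\{u,v\}$ is colourable.

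The theorem is now immediate from the definitions. An induced subgraph $H$ of a snark is non-removable precisely when $\tilde G-V(H)$ is colourable, and here $H$ is the edge induced by $\{u,v\}$. Hence $\{u,v\}$ is a non-removable pair of adjacent vertices in a nontrivial snark of girth $g$, and letting $g$ grow without bound finishes the proof.

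The only delicate point — and the one carrying the real content — is the colour-matching step behind the defective colouring. One must verify that the forced patterns on the connectors are mutually compatible across every join: by the Parity Lemma each $3$-edge-colouring of $M_g$ induces the colour vector $aaabc$, while each proper dipole $F_g$ presents the patterns $a,b,b$ and $a,c,c$ on its two connectors. Because the order of semiedges within a connector is irrelevant, one may permute colours and reorder semiedges so that every matched pair agrees, isolating $u$ and $v$ as the sole sites of failure. This is exactly the reasoning carried out at the end of the proof of Theorem~\ref{thm:snarxlargegirth}, so no genuinely new obstacle arises here.
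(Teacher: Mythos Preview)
Your proposal is correct and follows exactly the paper's approach: the paper offers no separate proof for this theorem, presenting it merely as an ``interpretation of the previous proof,'' and you have spelled out precisely that interpretation by extracting from the proof of Theorem~\ref{thm:snarxlargegirth} the facts that $\tilde G$ is a nontrivial snark of girth $g$ and that $\tilde G-\{u,v\}$ is $3$-edge-colourable. Your additional paragraph on the colour-matching step is a faithful summary of the corresponding passage in the proof of Theorem~\ref{thm:snarxlargegirth} and adds nothing new, as intended.
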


Another benefit of the construction presented in the proof of
Theorem~\ref{thm:snarxlargegirth} is a strengthening of the
original Kochol's construction~\cite{Ko}.

\begin{theorem}
For every $g\geq 5$ there exist infinitely many cyclically
$5$-connected snarks whose girth equals $g$.
\end{theorem}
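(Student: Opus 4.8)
The plan is to read the statement off the construction in the proof of Theorem~\ref{thm:snarxlargegirth}, treating the even and the odd values of~$g$ separately. For every even $g\ge 6$ there is nothing left to prove but multiplicity: the graph $\tilde G$ produced there is already a cyclically $5$-edge-connected snark with $\girth(\tilde G)=g$. Since Theorem~4.8 of \cite{NS-largepw} supplies infinitely many arc-transitive cubic graphs of girth~$g$, there are infinitely many admissible building blocks $L_g$, and as $|V(\tilde G)|$ grows with $|V(L_g)|$ the resulting snarks realise infinitely many isomorphism types.

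For $g=5$ and for odd $g\ge 7$ I would run the identical construction, but feed it a cubic graph $L_g$ of girth~$g$ that is merely cyclically $5$-edge-connected and contains two vertex-disjoint $g$-cycles, dropping the demand that $L_g$ be bipartite. Infinitely many such graphs exist for every $g\ge 5$: for odd $g\ge 7$ the arc-transitive graphs of \cite{NS-largepw} are non-bipartite of girth~$g$ and may be used directly, while for $g=5$ one may take any infinite family of cyclically $5$-edge-connected cubic graphs of girth~$5$. The crucial observation is that bipartiteness of $L_g$ entered Theorem~\ref{thm:snarxlargegirth} only to make $\tilde G-\{u,v\}$ colourable, hence only to fix the oddness at~$2$; it is irrelevant to each of the three properties now required. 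Indeed, that $\tilde G$ is a snark rests on $F_g$ being a proper dipole, which follows from $K_M$ being a snark for an \emph{arbitrary} $5$-pole $M$; the cyclic $5$-edge-connectivity rests on the cut-counting argument, whose only input about $F_g$ is that each of its edge cuts has at least three edges, a fact secured by deleting a path of length~$2$ from the cyclically $5$-edge-connected graph $L_g$; and $\girth(\tilde G)=g$ follows exactly as before, the bound $\girth(\tilde G)\ge g$ coming from $\girth(F_g)\ge g$ together with the decycling set $\{v_0,v_1,v_3,v_4\}$ of~$P$, and $\girth(\tilde G)\le g$ from choosing $M_g$ so as to retain a $g$-cycle. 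Letting $|V(L_g)|$ tend to infinity once more yields infinitely many pairwise non-isomorphic snarks.

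The step I expect to be the genuine obstacle is re-establishing $\girth(F_g)\ge g$ once $L_g$ is permitted to be non-bipartite, and in particular when $g=5$ coincides with the girth of the three copies of the Petersen graph from which $K$ is assembled. Because $U$ is a decycling set of $K$, every cycle of $F_g$ is forced through at least one copy of $M_g$, so I would have to check that the portion of such a cycle lying inside $M_g$ is long enough to raise the total length to~$g$; this is precisely where the distance-$2$ placement of the decycling triples and a careful identification of the dangling edges of $M_g$ are used. For $g=5$ the estimate is immediate, the shortest competing cycles being the inherited $5$-cycles of the Petersen copies; for odd $g\ge 7$ I would reuse the same wrap-around estimate as in the even case, after first confirming that it nowhere appeals to the bipartiteness of $L_g$.
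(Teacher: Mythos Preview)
Your proposal is essentially correct and, for $g\ge 6$, coincides with the paper's argument: invoke the construction of Theorem~\ref{thm:snarxlargegirth} directly for even $g$, and for odd $g\ge 7$ drop the bipartiteness requirement on $L_g$, noting that bipartiteness was used only to secure the $3$-edge-colouring of $\tilde G-\{u,v\}$ (hence $\omega=2$), not for the snark property, the cyclic $5$-edge-connectivity, or the girth computation. Your remark that Theorem~4.8 of \cite{NS-largepw} already supplies infinitely many suitable $L_g$ is exactly how the paper obtains infinitely many outputs.

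The one genuine divergence is the case $g=5$. The paper does \emph{not} run the superposition construction here at all; it simply cites existing infinite families of cyclically $5$-connected snarks of girth~$5$ (rotation snarks and permutation snarks from \cite{MS-superp}). Your plan instead feeds the construction a cubic $L_5$ of girth~$5$ and then worries about short cycles in $F_5$ created by the Petersen pieces of~$K$. That worry is largely unnecessary: once you have established that $\tilde G$ is cyclically $5$-edge-connected (and $\tilde G$ is certainly not $K_4$, $K_{3,3}$, or the theta graph), the edges leaving a shortest cycle form a cycle-separating cut, so $\girth(\tilde G)\ge 5$ follows immediately, and a surviving $5$-cycle in $M_5$ gives equality. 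Thus your route would close, but the paper's shortcut of citing known girth-$5$ families is both cleaner and avoids the detour you flag as the ``genuine obstacle''.
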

\begin{proof}
Snarks constructed in the proof of
Theorem~\ref{thm:snarxlargegirth} satisfy the statement for
every even $g\geq 6$. If $g\geq 7$ is odd, we modify the
construction by taking $L_g$ from the infinitely many graphs of
girth $g$ constructed in Theorem~4.8 in \cite{NS-largepw}.
Since we do not care whether $L_g$ is $3$-edge-colourable or
not, we do not require $L_g$ to be bipartite. Otherwise, the
construction proceeds as in the proof of
Theorem~\ref{thm:snarxlargegirth}. Finally, if $g=5$, there are
several available constructions of infinitely many cyclically
$5$-connected snarks of girth $5$, for example rotation snarks
or permutation snarks constructed in Theorem~5.1 and
Example~6.4 of \cite{MS-superp}, respectively.
\end{proof}

It is also possible to construct snarks with cyclic
connectivity $4$ and girth $g$ for each $g\ge 5$. The
construction is similar to that described in the proof of
Theorem~\ref{thm:snarxlargegirth} except that one has to use
the method of Section~5 of \cite{Ko} instead of Section~4.

\section{Final remarks}
\noindent{}We believe that nontrivial snarks with any given
oddness $\omega$ and colouring defect $d$ exist for each pair
$d\ge 3\omega/2$ (which is the restriction posed by
Theorem~\ref{thm:oddness}). Constructing such snarks would be
worthwhile as it would provide a complete generalisation of
Theorem~3.4 of Jin and Steffen~\cite{JS} to nontrivial snarks.
Unfortunately, our construction, which only deals with
$\omega=2$, does not easily generalise to larger values of
$\omega$.

Another possibility is to consider an analogous (but weaker)
problem where oddness is replaced with resistance. Recall that
$\omega(G)\ge\rho(G)$ for every bridgeless cubic graph $G$. It
follows from Theorem~\ref{thm:oddness} that $\df{G}\geq
3\rho(G)/2$, and we may ask whether for any given $\rho\ge 2$
and $d\ge 3\omega/2$ there exists a nontrivial snark with
resistance $\rho$ and colouring defect $d$. We think that the
answer is ``yes''.

\section*{Acknowledgements}
\noindent{}This research was partially supported by the grant
No.~APVV-19-0308 of Slovak Research and Development Agency. The
first and the third author were partially supported by the
grant VEGA~2/0078/20 of Slovak Ministry of Education. The
second author and the fourth author were partially supported by
the grant VEGA-1/0743/21. The authors would like to express
their gratitude to an anonymous referee whose suggestions
helped to improve the presentation of this paper.

\medskip

\end{document}